\documentclass[11pt]{article}
\usepackage{amsmath,amssymb,amsthm}

\setlength{\topmargin}{-0.5cm}
\setlength{\textheight}{22cm}
\setlength{\evensidemargin}{0.5cm}
\setlength{\oddsidemargin}{0.5cm}
\setlength{\textwidth}{15cm}

\newtheorem{theorem}{Theorem}[section]
\newtheorem{lemma}[theorem]{Lemma}
\newtheorem{proposition}[theorem]{Proposition}
\newtheorem{corollary}[theorem]{Corollary}

\theoremstyle{definition}

\numberwithin{equation}{section}

\renewcommand{\phi}{\varphi}

\newcommand{\Aut}{\operatorname{Aut}}
\newcommand{\Ker}{\operatorname{Ker}}

\newcommand{\N}{\mathbb{N}}
\newcommand{\Z}{\mathbb{Z}}
\newcommand{\R}{\mathbb{R}}
\newcommand{\T}{\mathbb{T}}

\title{Some remarks on topological full groups \\
of Cantor minimal systems II}
\author{Hiroki Matui \\
Graduate School of Science \\
Chiba University \\
Inage-ku, Chiba 263-8522, Japan}
\date{}

\begin{document}
\maketitle

\begin{abstract}
We prove that commutator subgroups of topological full groups 
arising from minimal subshifts have exponential growth. 
We also prove that the measurable full group associated to 
the countable, measure-preserving, ergodic and hyperfinite 
equivalence relation is topologically generated by two elements. 
\end{abstract}

\section{Introduction}

We study algebraic properties of topological full groups of 
Cantor minimal systems. 
By a Cantor set, we mean a metrizable topological space 
which is compact, totally disconnected 
(the closed and open sets form a base for the topology) and 
has no isolated points. 
Any two such spaces are homeomorphic. 
A homeomorphism $\phi:X\to X$ is said to be minimal 
if for all $x\in X$ the set $\{\phi^n(x)\mid n\in\Z\}$ is dense in $X$, 
or equivalently, 
there are no non-trivial closed $\phi$-invariant subsets of $X$. 
A pair $(X,\phi)$ of 
a Cantor set $X$ and a minimal homeomorphism $\phi$ of it 
is called a Cantor minimal system. 
The study of orbit structure of such dynamical systems was initiated 
by T. Giordano, I. F. Putnam and C. F. Skau in \cite{GPS95crelle}. 
They classified Cantor minimal systems 
up to topological orbit equivalence. 
This classification was later extended 
to cover all minimal actions of finitely generated abelian groups 
on Cantor sets \cite{GMPS}. 

For a Cantor minimal system $(X,\phi)$, 
the topological full group $[[\phi]]$ was introduced in \cite{GPS99Israel}. 
The group $[[\phi]]$ consists of all homeomorphisms $\psi:X\to X$ 
for which there exists a continuous map $c:X\to\Z$ 
such that $\psi(x)=\phi^{c(x)}(x)$. 
Clearly $[[\phi]]$ is infinite and countable. 
It was shown in \cite[Corollary 4.4]{GPS99Israel} that 
$[[\phi_1]]$ is isomorphic to $[[\phi_2]]$ as an abstract group 
if and only if $\phi_1$ is conjugate to $\phi_2$ or $\phi_2^{-1}$. 
This result suggests that 
the algebraic structure of the topological full group $[[\phi]]$ is 
rich enough to recover the dynamics of $\phi$. 
Since then 
various properties of $[[\phi]]$ have been studied in \cite{M06IJM,BM,GM}. 
We collect below some of them. 
The commutator subgroup $D([[\phi]])$ of $[[\phi]]$ is simple 
(\cite[Theorem 4.9]{M06IJM} and \cite[Theorem 3.4]{BM}). 
The quotient group $[[\phi]]/D([[\phi]])$ is isomorphic to 
the direct sum of $\Z$ and 
$C(X,\Z/2\Z)/\{f-f\circ\phi\mid f\in C(X,\Z/2\Z)\}$ 
(\cite[Section 5]{GPS99Israel} and \cite[Theorem 4.8]{M06IJM}), 
where $C(X,\Z/2\Z)$ denotes 
the $\Z/2\Z$-valued continuous functions on $X$ with pointwise addition. 
The commutator subgroup $D([[\phi]])$ is finitely generated 
if and only if $\phi$ is a minimal subshift over a finite alphabet 
(\cite[Theorem 5.4]{M06IJM}). 
Recently, R. Grigorchuk and K. Medynets \cite{GM} proved that 
$[[\phi]]$ is locally embeddable into finite groups. 
It is not yet known if $[[\phi]]$ is amenable. 

In the present paper, we prove a couple of new results about $[[\phi]]$. 
As mentioned above, for a minimal subshift $\phi$, 
$D([[\phi]])$ is finitely generated. 
It is then natural to consider the growth of $D([[\phi]])$. 
We first observe that when $\phi$ is an odometer, 
any finitely generated subgroup of $[[\phi]]$ has polynomial growth 
(Proposition \ref{odmtr}). 
Next, we prove that $D([[\phi]])$ contains the lamplighter group 
if and only if $\phi$ is not an odometer 
(Theorem \ref{lamplighter}). 
In particular, this implies that when $\phi$ is a minimal subshift, 
$D([[\phi]])$ has exponential growth (Corollary \ref{exponential}). 
Existence (or non-existence) of finitely generated subgroups 
of intermediate growth remains open. 
In Section 3, we discuss generators 
of the topological full group $[[\phi]]$ of Sturmian shifts $\phi$. 
We have already shown in \cite[Example 6.2]{M06IJM} that 
$[[\phi]]$ is generated by three elements. 
Based on this result, 
J. Kittrell and T. Tsankov \cite{KT10Ergodic} proved that 
the measurable full group associated to 
the countable, measure-preserving, ergodic and hyperfinite 
equivalence relation on the standard probability space is 
topologically generated by at most three elements. 
In this paper we shall show that 
$D([[\phi]])$ is contained in a subgroup generated by two elements 
(Proposition \ref{sturmian}). 
By using this, we can conclude that 
the measurable full group of the hyperfinite equivalence relation is 
topologically generated by two elements (Theorem \ref{t=2}). 
Also, this readily improves the estimates obtained in \cite{KT10Ergodic} 
for the number of topological generators of certain measurable full groups 
(Corollary \ref{cost}).

\section{Growth of topological full groups}

In this section, we discuss 
growth of (finitely generated subgroups of) topological full groups. 
The reader may consult \cite{Harpe} for basic theory of growth of groups. 

For $m\in\N$, 
we write $\Z_m=\Z/m\Z$ and identify it with $\{0,1,\dots,m{-}1\}$. 
The cardinality of a set $F$ is written $\lvert F\rvert$. 

Let us first recall the odometers. 
Let $(m_n)_n$ be a sequence of natural numbers such that 
$m_n$ divides $m_{n+1}$ and $m_n\to\infty$ as $n\to\infty$. 
There exists a surjective homomorphism $\rho_n:\Z_{m_{n+1}}\to\Z_{m_n}$ 
such that $\rho_n(1)=1$. 
We let $X$ be the inverse limit of $\Z_{m_n}$ under the map $\rho_n$, 
that is, 
\[
X=\left\{(x_n)_n\in\prod\Z_{m_n}\mid\rho_n(x_{n+1})=x_n\right\}. 
\]
With the product topology, $X$ is a Cantor set. 
Define a homeomorphism $\phi:X\to X$ by $\phi((x_n)_n)=(x_n+1)_n$. 
It is easy to see that $(X,\phi)$ is a Cantor minimal system. 
We call $(X,\phi)$ the odometer of type $(m_n)_n$. 

\begin{proposition}\label{odmtr}
Let $(X,\phi)$ be the odometer of type $(m_n)_n$. 
The topological full group $[[\phi]]$ is written as 
an increasing union of subgroups of the form $\Z^{m_n}\rtimes S_{m_n}$, 
where the symmetric group $S_{m_n}$ acts on $\Z^{m_n}$ 
by permutations of the coordinates. 
In particular, 
any finitely generated subgroup of $[[\phi]]$ has polynomial growth. 
\end{proposition}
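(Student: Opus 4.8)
The plan is to make the self-similar tower structure of the odometer completely explicit, realize the two asserted families of subgroups inside $[[\phi]]$ by hand, and then quote elementary facts about growth.

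First I would fix, for each $n$ and each $k\in\Z_{m_n}$, the clopen set $X_k^{(n)}=\{(x_j)_j\in X\mid x_n=k\}$. These $m_n$ sets partition $X$, the partitions for varying $n$ are nested (each level-$(n{+}1)$ atom lies in a level-$n$ atom), and together they form a basis of finite clopen partitions, so every clopen subset of $X$ is a union of finitely many $X_k^{(n)}$ at a single level $n$. Moreover $\phi$ cyclically permutes them, $\phi(X_k^{(n)})=X_{k+1}^{(n)}$ with indices mod $m_n$. Identifying $X$ with $\Z_{m_n}\times X^{(n)}$, where $X^{(n)}:=X_0^{(n)}$, via $x\mapsto(x_n,\phi^{-x_n}(x))$, one checks that $\phi(k,y)=(k+1,y)$ for $k<m_n-1$ and that $\phi^{m_n}(k,y)=(k,\phi_n(y))$ for every $k$, where $\phi_n:=\phi^{m_n}|_{X^{(n)}}$ is again an odometer on $X^{(n)}$. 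Thus $\phi^{m_n}$ acts trivially on the $\Z_{m_n}$-coordinate and as $\phi_n$ on the tail.

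Next I would build $H_n\le[[\phi]]$. For $\sigma\in S_{m_n}$ define $u_\sigma(k,y)=(\sigma(k),y)$; on $X_k^{(n)}$ this coincides with $\phi^{\sigma(k)-k}$, where $\sigma(k)-k$ is taken in $\{-(m_n-1),\dots,m_n-1\}$ so that no wrap-around occurs, hence $u_\sigma\in[[\phi]]$ and $\sigma\mapsto u_\sigma$ is an injective homomorphism of $S_{m_n}$. For $\mathbf a\in\Z^{m_n}$ define $v_{\mathbf a}(k,y)=(k,\phi_n^{a_k}(y))$; on $X_k^{(n)}$ this equals $\phi^{m_na_k}$, so $v_{\mathbf a}\in[[\phi]]$ and $\mathbf a\mapsto v_{\mathbf a}$ is an injective homomorphism of $\Z^{m_n}$ (using that $\phi_n$ has infinite order). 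A short computation gives $u_\sigma v_{\mathbf a}u_\sigma^{-1}=v_{\sigma\cdot\mathbf a}$ with $(\sigma\cdot\mathbf a)_k=a_{\sigma^{-1}(k)}$, and $v_{\mathbf a}=u_\sigma$ forces $\mathbf a=0$, $\sigma=\mathrm{id}$; hence $H_n:=\{v_{\mathbf a}u_\sigma\mid\mathbf a\in\Z^{m_n},\ \sigma\in S_{m_n}\}$ is a subgroup isomorphic to $\Z^{m_n}\rtimes S_{m_n}$ with $S_{m_n}$ permuting the coordinates of $\Z^{m_n}$.

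The core step is to show $[[\phi]]=\bigcup_nH_n$ and that the union is increasing. Given $\psi\in[[\phi]]$, write $\psi(x)=\phi^{c(x)}(x)$ with $c\colon X\to\Z$ continuous; then $c$ is locally constant (as $X$ is compact and totally disconnected), so by the basis property there is an $n$ with $c$ constant on each $X_k^{(n)}$, say $c\equiv c_k$. Writing $c_k=q_km_n+r_k$ with $0\le r_k<m_n$ and following the wrap-around of $\phi^{r_k}$ over the top of the tower, one obtains $\psi(k,y)=(\sigma(k),\phi_n^{b_k}(y))$ for suitable $b_k\in\Z$, where $\sigma(k)\equiv k+c_k\pmod{m_n}$; since $\psi$ is bijective, $\sigma\in S_{m_n}$, so $\psi\in H_n$. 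The same bookkeeping shows the cocycle of every element of $H_n$ is constant on level-$n$, hence on level-$(n{+}1)$, atoms, so $H_n\subseteq H_{n+1}$. Finally, a finitely generated $K\le[[\phi]]$ has all of its generators in a single $H_N$, so $K\le H_N\cong\Z^{m_N}\rtimes S_{m_N}$, which is virtually abelian (the normal subgroup $\Z^{m_N}$ has index $m_N!$) and therefore of polynomial growth; and a finitely generated subgroup of a group of polynomial growth has polynomial growth, since its ball of radius $r$ sits inside the radius-$r$ ball of the ambient group for a generating set containing the chosen generators. I expect the only genuine work to be the wrap-around bookkeeping in the decomposition $\psi=v_{\mathbf b}u_\sigma$ and checking that $\sigma$ is a permutation; everything else is routine.
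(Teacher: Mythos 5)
Your argument is correct and is essentially the paper's proof: your $H_n$ coincides with the paper's subgroup $\Gamma_n$ of elements whose cocycle is constant on the level-$n$ cylinder sets, and the identification with $\Z^{m_n}\rtimes S_{m_n}$ (explicit tower coordinates versus the paper's kernel--section argument for $\pi:\Gamma_n\to S_{m_n}$) differs only cosmetically. The concluding growth argument (virtually abelian, and finitely generated subgroups inherit polynomial growth) is the same implicit step the paper relies on.
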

\begin{proof}
Suppose that $(X,\phi)$ is the odometer of type $(m_n)_n$. 
For $k\in\N$ and $l\in\Z_{m_k}$, 
we set $U(k,l)=\{(x_n)_n\in X\mid x_k=l\}$. 
Then $\{U(k,l)\mid l\in\Z_{m_k}\}$ is a clopen partition of $X$. 
For $\psi\in[[\phi]]$, 
we let $c_\psi:X\to\Z$ be the continuous function 
satisfying $\psi(x)=\phi^{c_\psi(x)}(x)$. 
Define a subgroup $\Gamma_k\subset[[\phi]]$ by 
\[
\Gamma_k=\{\psi\in[[\phi]]
\mid\text{$c_\psi$ is constant on $U(k,l)$ for each $l\in\Z_{m_k}$}\}. 
\]
Clearly $\Gamma_k\subset\Gamma_{k+1}$ and 
the union of $\Gamma_k$ equals $[[\phi]]$. 

Fix $k\in\N$. 
We would like to show that 
$\Gamma_k$ is isomorphic to $\Z^{m_k}\rtimes S_{m_k}$. 
For any $\psi\in\Gamma_k$, 
there exists $\tau\in S_{m_k}$ such that $\psi(U(k,l))=U(k,\tau(l))$, 
and so we obtain a homomorphism $\pi:\Gamma_k\to S_{m_k}$. 
For each $\tau\in S_{m_k}$, one can define $\psi\in\Gamma_k$ by 
$\psi(x)=\phi^{\tau(l)-l}(x)$ for $x\in U(k,l)$, 
where $l$ and $\tau(l)$ are regarded as elements in $\{0,1,\dots,m_k{-}1\}$. 
The map $\tau\mapsto\psi$ is a homomorphism from $S_{m_k}$ to $\Gamma_k$ 
and is a right inverse of $\pi$. 
If $\psi\in\Gamma_k$ belongs to the kernel of $\pi$, 
then there exist $n_l\in\Z$ for $l\in\Z_{m_k}$ such that 
$\psi(x)=\phi^{n_lm_k}(x)$ holds for any $x\in U(k,l)$. 
Evidently, $\psi\mapsto(n_l)_l$ gives an isomorphism 
from $\Ker\pi$ to $\Z^{m_k}$. 
Consequently, $\Gamma_k$ is isomorphic to $\Z^{m_k}\rtimes S_{m_k}$. 
\end{proof}

The following lemma is well known. 
For the convenience of the reader, we include an explicit proof. 

\begin{lemma}
Let $(X,\phi)$ be a Cantor minimal system. 
If $(X,\phi)$ is not an odometer, 
then there exists a continuous map $\pi:X\to\{0,1\}^\Z$ 
such that $\pi(X)$ is infinite and $\pi\circ\phi=\sigma\circ\pi$, 
where $\sigma:\{0,1\}^\Z\to\{0,1\}^\Z$ is the shift. 
\end{lemma}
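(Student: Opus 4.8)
The plan is to characterize odometers dynamically and produce the required factor map as a limit of finite approximations. Recall that a Cantor minimal system is an odometer if and only if it is conjugate to an inverse limit of finite cyclic rotations; equivalently, there exists a nested sequence of clopen partitions $\mathcal{P}_1\succ\mathcal{P}_2\succ\cdots$ of $X$, generating the topology, each consisting of a single $\phi$-tower (so $\phi$ cyclically permutes the levels) with $|\mathcal{P}_n|\to\infty$. So if $(X,\phi)$ is \emph{not} an odometer, then for every nested generating sequence of clopen Kakutani–Rokhlin partitions of $X$, past some stage each partition must involve at least two distinct towers. I would use this to extract a clopen set $A\subset X$ that genuinely records the dynamics: concretely, I want a nonempty clopen $A\neq X$ such that the map $x\mapsto(\mathbf{1}_A(\phi^n(x)))_{n\in\Z}\in\{0,1\}^\Z$ has infinite image.

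First I would set up the Kakutani–Rokhlin machinery: fix a point (or just a nested sequence of clopen sets shrinking to a point) and build a nested sequence of Kakutani–Rokhlin partitions $\mathcal{P}_n$ refining to the topology, with the base of $\mathcal{P}_{n+1}$ contained in the base of $\mathcal{P}_n$. This is standard for Cantor minimal systems. Then I would observe the dichotomy: either at every level $\mathcal{P}_n$ has exactly one tower, in which case $\phi$ is (conjugate to) the odometer given by the tower heights — contradicting our hypothesis — or there is some level $n_0$ at which $\mathcal{P}_{n_0}$ has at least two towers of distinct heights, or more robustly, after passing to a subsequence the partition has a ``non-odometer'' pattern forever. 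The clean way to phrase it: since $(X,\phi)$ is not an odometer, there is a clopen set $A$ and an integer $p$ such that $A$ is \emph{not} equal to $\phi^p(A)$ while $A$ is a union of levels in some $\mathcal{P}_n$; more simply, there is a clopen $A$ whose return-time structure is non-periodic.

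Once such an $A$ is in hand, define $\pi:X\to\{0,1\}^\Z$ by $\pi(x)_n=\mathbf{1}_A(\phi^n(x))$. This is continuous because $A$ is clopen and $\phi$ is a homeomorphism, and it visibly intertwines $\phi$ with the shift $\sigma$. The only substantive point remaining is that $\pi(X)$ is infinite. If $\pi(X)$ were finite, then $\pi(X)$ is a finite $\sigma$-invariant (hence minimal, by minimality of $(X,\phi)$ pushed forward) subset of $\{0,1\}^\Z$, so it is a single periodic orbit of some period $q$; that forces $\mathbf{1}_A\circ\phi^q=\mathbf{1}_A$, i.e.\ $A=\phi^{-q}(A)$, so $A,\phi(A),\dots,\phi^{q-1}(A)$ partition $X$ into clopen sets cyclically permuted by $\phi$. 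Iterating this observation over a generating sequence of such clopen sets $A$ (taken from the partitions $\mathcal{P}_n$) would exhibit $(X,\phi)$ as an inverse limit of finite cyclic rotations, i.e.\ an odometer — contradiction. Hence $\pi(X)$ is infinite.

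The main obstacle, and the place I would spend the most care, is the step connecting ``every clopen set has periodic return structure'' to ``$(X,\phi)$ is an odometer'' — that is, making precise which clopen sets $A$ to use and verifying that cyclic-periodicity of all of them really does reconstruct an odometer structure. This is where the Kakutani–Rokhlin partitions do the work: one chooses the $A$'s to be (unions of levels of) a refining sequence of clopen partitions generating the topology, checks that periodicity of each such $A$ forces each $\mathcal{P}_n$ to be (refinable to) a single cyclically-permuted tower, and then the inverse-limit description of the odometer falls out. I expect the rest — continuity of $\pi$, the intertwining relation, and the finite-implies-periodic argument via minimality — to be routine.
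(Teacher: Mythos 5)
Your overall strategy is the same as the paper's: code the orbit of a clopen set $A$ by $\pi(x)_n=1_A(\phi^n(x))$, note continuity and $\pi\circ\phi=\sigma\circ\pi$ are immediate, and argue that if \emph{every} such coding had finite image then $(X,\phi)$ would be an odometer. However, two points in your write-up are not right as they stand. First, the deduction ``finite image $\Rightarrow$ $1_A\circ\phi^q=1_A$ $\Rightarrow$ $A,\phi(A),\dots,\phi^{q-1}(A)$ partition $X$ into clopen sets cyclically permuted by $\phi$'' is false: $\phi^q$-invariance of $A$ does not make the translates $\phi^j(A)$ pairwise disjoint (e.g.\ in the $2$-adic odometer take $A$ to be the set of points congruent to $0$ or $1$ mod $4$; then $A=\phi^{-4}(A)$ but $A\cap\phi(A)\neq\emptyset$). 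What finiteness of $\pi(X)$ actually gives you is that the fibers of $\pi$ (equivalently, the atoms of the finite clopen partition generated by $\phi^{-j}(A)$, $0\le j<q$) form a clopen partition cyclically permuted by $\phi$; that is the object you must work with.

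Second, and more seriously, the implication that carries the entire content of the lemma --- ``if all clopen sets have periodic coding, then $(X,\phi)$ is an odometer'' --- is exactly the step you label as the main obstacle and leave as a plan involving Kakutani--Rokhlin partitions. Until that is carried out, the proof is not complete, and the KR machinery is not actually needed. The paper does this step directly: enumerate \emph{all} clopen sets $O_n$, assume every coding $\pi_n$ has finite image, and inductively build continuous maps $\tilde\pi_n:X\to\Z_{m_n}$ with $\tilde\pi_n\circ\phi=\tilde\pi_n+1$ together with compatible surjections $\rho_n:\Z_{m_{n+1}}\to\Z_{m_n}$, by taking at each stage the (finite) image of the joint map $\pi_{n+1}\times\tilde\pi_n$; minimality guarantees each finite factor is a single cyclic rotation. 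The resulting equivariant map $f=(\tilde\pi_n)_n$ into the inverse limit is injective because the $O_n$ separate points, so $(X,\phi)$ is conjugate to the odometer of type $(m_n)_n$, the desired contradiction. If you want to keep your formulation, the detail you must supply is precisely this compatibility step (the join of two cyclic clopen factor partitions is again cyclic, by minimality) plus the separation-of-points argument showing the limit map is a conjugacy; ``iterating the observation over a generating sequence'' is a correct slogan but it is where the proof lives.
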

\begin{proof}
Let $\{O_n\mid n\in\N\}$ be the set of all clopen subsets of $X$. 
For each $n\in\N$, we define a continuous map $\pi_n:X\to\{0,1\}^\Z$ by 
\[
\pi_n(x)_k=\begin{cases}1&\phi^k(x)\in O_n\\0&\phi^k(x)\notin O_n\end{cases}
\]
for $x\in X$. 
Then one has $\pi_n\circ\phi=\sigma\circ\pi_n$. 

Arguing by contradiction, 
we assume that $\pi_n(X)$ is finite for all $n\in\N$. 
We will construct continuous maps $\tilde\pi_n:X\to\Z_{m_n}$ and 
surjective homomorphisms $\rho_n:\Z_{m_{n+1}}\to\Z_{m_n}$ such that 
$\tilde\pi_n(\phi(x))=\tilde\pi_n(x)+1$ for $x\in X$, 
$\tilde\pi_n=\rho_n\circ\tilde\pi_{n+1}$ and 
$\pi_n$ factors through $\tilde\pi_n$. 
First, letting $m_1=\lvert\pi_1(X)\rvert$, 
we can find a continuous map $\tilde\pi_1:X\to\Z_{m_1}$ 
such that $\tilde\pi_1(\phi(x))=\tilde\pi_1(x)+1$ and 
$\pi_1$ factors through $\tilde\pi_1$. 
Suppose that we have constructed $\tilde\pi_n:X\to\Z_{m_n}$. 
Consider the continuous map 
$\pi_{n+1}\times\tilde\pi_n:X\to\{0,1\}^\Z\times\Z_{m_n}$ and 
let $m_{n+1}=\lvert(\pi_{n+1}\times\tilde\pi_n)(X)\rvert$. 
Then, identifying $(\pi_{n+1}\times\tilde\pi_n)(X)$ with $\Z_{m_{n+1}}$, 
we can construct $\tilde\pi_{n+1}:X\to\Z_{m_{n+1}}$ and 
$\rho_n:\Z_{m_{n+1}}\to\Z_{m_n}$ as desired. 

Let $(Y,\psi)$ be the odometer of type $(m_n)_n$. 
Define the continuous map $f:X\to Y$ by $f(x)=(\tilde\pi_n(x))_n$. 
Clearly we have $f\circ\phi=\psi\circ f$. 
For any distinct points $x,x'\in X$, 
there exists $O_n$ such that $x\in O_n$ and $x'\notin O_n$, 
which means that $f$ is injective. 
Thus $(X,\phi)$ is conjugate to $(Y,\psi)$, 
which completes the proof. 
\end{proof}

In what follows, for a clopen subset $O\subset X$, 
\[
1_O:X\to\Z_2
\]
denotes the $\Z_2$-valued characteristic function of $O$. 
The following proposition is used in the proof of Theorem \ref{lamplighter} 
in order to construct an embedding 
of the infinite direct sum of $\Z_2$ into $[[\phi]]$. 

\begin{proposition}
Let $(X,\phi)$ be a Cantor minimal system. 
If $(X,\phi)$ is not an odometer, 
then there exists a clopen subset $O\subset X$ such that 
for any finite subset $F\subset\Z$, the function 
\[
\sum_{k\in F}1_O\circ\phi^k
\]
is not identically zero mod $2$. 
\end{proposition}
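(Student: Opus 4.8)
The plan is to exploit the subshift factor produced by the preceding lemma. Since $(X,\phi)$ is not an odometer, I first fix a continuous map $\pi\colon X\to\{0,1\}^\Z$ with $\pi\circ\phi=\sigma\circ\pi$ and $\pi(X)$ infinite, and put $Y=\pi(X)$. Being the continuous image of a compact space, $Y$ is closed, and since $\phi$ is onto we have $\sigma(Y)=\sigma(\pi(X))=\pi(\phi(X))=\pi(X)=Y$, so $Y$ is a subshift. I will take
\[
O=\pi^{-1}\bigl(\{y\in\{0,1\}^\Z\mid y_0=1\}\bigr),
\]
which is clopen because $\pi$ is continuous and the cylinder $\{y_0=1\}$ is clopen. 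Iterating $\pi\circ\phi=\sigma\circ\pi$ gives $\pi\circ\phi^k=\sigma^k\circ\pi$ for every $k\in\Z$, so $1_O(\phi^k(x))=1$ exactly when $\pi(x)_k=1$; hence, for a finite $F\subset\Z$, the value of $\sum_{k\in F}1_O\circ\phi^k$ at $x$ equals $\sum_{k\in F}\pi(x)_k$ in $\Z_2$.

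With this reformulation the proposition reduces to the claim that for every nonempty finite $F\subset\Z$ there is some $y\in Y$ with $\sum_{k\in F}y_k=1$. I will prove this by contradiction: assume $\sum_{k\in F}y_k=0$ for all $y\in Y$, where $F$ is nonempty and finite. Applying this to the shifted points $\sigma^j(y)\in Y$ and using $\sigma(Y)=Y$ gives $\sum_{k\in F}y_{k+j}=0$ for all $j\in\Z$ and all $y\in Y$. Translating $F$, I may assume $\min F=0$; set $d=\max F$.

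Each relation $\sum_{k\in F}y_{k+j}=0$ is a linear identity over $\Z_2$ in which the highest term $y_{j+d}$ and the lowest term $y_j$ both occur with coefficient $1$ (the latter because $0\in F$). Isolating the highest term expresses $y_{j+d}$ in terms of $y_j,\dots,y_{j+d-1}$, and isolating the lowest term expresses $y_j$ in terms of $y_{j+1},\dots,y_{j+d}$. Running the first recurrence forward and the second backward, starting from the block $(y_0,\dots,y_{d-1})$, shows that every $y\in Y$ is completely determined by that block, whence $\lvert Y\rvert\le 2^d$ (and $\lvert Y\rvert\le1$ if $d=0$). This contradicts the infiniteness of $Y$, and the proposition follows.

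I expect no serious obstacle. The one point that needs care is that a single linear relation valid throughout $Y$, once spread over all shifts, genuinely forces a \emph{two-sided} linear recurrence; it is precisely the normalization $0\in F$ that lets the recurrence run backward as well as forward, without which one would control only the forward tails of the sequences in $Y$.
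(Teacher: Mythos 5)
Your proof is correct and follows essentially the same route as the paper: you take the same clopen set $O=\{x\mid \pi(x)_0=1\}$ and derive a contradiction by showing that a vanishing $\Z_2$-relation propagates as a two-sided linear recurrence, so every point of $\pi(X)$ is determined by finitely many coordinates, contradicting the infiniteness of $\pi(X)$. The paper phrases this as ``two points agreeing on the window $\{l,\dots,m\}$ must coincide,'' which is the same argument in slightly different words.
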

\begin{proof}
By the lemma above, 
there exists a continuous map $\pi:X\to\{0,1\}^\Z$ 
such that $\pi(X)$ is infinite and $\pi\circ\phi=\sigma\circ\pi$, 
where $\sigma:\{0,1\}^\Z\to\{0,1\}^\Z$ is the shift. 
We identify $\{0,1\}$ with $\Z/2\Z$. 
Set 
\[
O=\{x\in X\mid\pi(x)_0=1\}. 
\]
In other words, $1_O(x)=\pi(x)_0\in\Z/2\Z$. 
We would like to see that 
the clopen subset $O\subset X$ satisfies the requirement. 
Let $F\subset\Z$ be a finite subset. 
Suppose that the function 
\[
\sum_{k\in F}1_O\circ\phi^k
\]
is identically zero mod $2$. 
Put $l=\min\{k\in F\}$ and $m=\max\{k\in F\}$. 
Assume that 
$x,y\in X$ satisfies $\pi(x)_n=\pi(y)_n$ for all $n\in\{l,l{+}1,\dots,m\}$. 
Then 
\begin{align*}
\pi(x)_{l-1}
&=1_O(\phi^{l-1}(x))\\
&=1_O(\phi^{l-1}(x))+\sum_{k\in F}1_O(\phi^{k-1}(x))\\
&=\sum_{k\in F\setminus\{l\}}1_O(\phi^{k-1}(x))\\
&=\sum_{k\in F\setminus\{l\}}\pi(x)_{k-1}, 
\end{align*}
and so $\pi(x)_{l-1}=\pi(y)_{l-1}$. 
Repeating this procedure, we obtain $\pi(x)_n=\pi(y)_n$ for every $n\leq l$. 
In the same way we get $\pi(x)_n=\pi(y)_n$ for every $n\geq m$. 
Thus $\pi(x)=\pi(y)$. 
This means that the cardinality of $\pi(X)$ is at most $2^{m-l+1}$, 
which is a contradiction. 
\end{proof}

We call the wreath product 
\[
L=\left(\bigoplus_\Z\Z_2\right)\rtimes\Z
\]
the lamplighter group, 
where the semi-direct product is taken with respect to the shift action. 
It is easy to see that $L$ is finitely generated and 
that $L$ contains a free semi-group on two generators. 
Hence $L$ has exponential growth (see \cite[VII.1]{Harpe} for example). 

The technique we employ in the proof of the following theorem 
is essentially the same as that of \cite[Theorem 8.1]{DFG}. 
I am grateful to Koji Fujiwara for explaining this technique. 

\begin{theorem}\label{lamplighter}
Let $(X,\phi)$ be a Cantor minimal system. 
The following three conditions are equivalent. 
\begin{enumerate}
\item $(X,\phi)$ is not an odometer. 
\item $D([[\phi]])$ contains the lamplighter group $L$. 
\item $[[\phi]]$ contains the lamplighter group $L$. 
\end{enumerate}
\end{theorem}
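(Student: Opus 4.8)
The plan is to prove $(1)\Rightarrow(2)\Rightarrow(3)\Rightarrow(1)$. Here $(2)\Rightarrow(3)$ is immediate, since $D([[\phi]])\subseteq[[\phi]]$. For $(3)\Rightarrow(1)$ I argue by contraposition: if $(X,\phi)$ is an odometer, then by Proposition~\ref{odmtr} every finitely generated subgroup of $[[\phi]]$ has polynomial growth, whereas $L$ is finitely generated and of exponential growth; so $L$ does not embed into $[[\phi]]$. Thus everything rests on $(1)\Rightarrow(2)$.

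To prove $(1)\Rightarrow(2)$, write $L=M\rtimes\Z$ with $M=\bigoplus_\Z\Z_2$ and $\Z$ acting by the shift. It suffices to exhibit an involution $a\in D([[\phi]])$ and an element $t\in D([[\phi]])$ of infinite order such that the conjugates $a_k:=t^kat^{-k}$ $(k\in\Z)$ pairwise commute and $\prod_{k\in F}a_k\neq1$ for every finite nonempty $F\subset\Z$. Indeed the first condition makes $\langle a_k\mid k\in\Z\rangle$ a normal subgroup of $\langle a,t\rangle$ which is a quotient of $\bigoplus_\Z\Z_2$ on which $t$ acts by the shift; since every element of $\bigoplus_\Z\Z_2$ is a finite sum of standard generators, the second condition forces the kernel of this quotient to be trivial, so $\langle a_k\mid k\in\Z\rangle\cong\bigoplus_\Z\Z_2$; and as it has exponent $2$ while $t$ has infinite order it meets $\langle t\rangle$ trivially, whence $\langle a,t\rangle\cong L$ and lies in $D([[\phi]])$.

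For the involutions, use the preceding Proposition to fix a clopen $O\subset X$ with $\{1_O\circ\phi^k\}_{k\in\Z}$ linearly independent over $\Z/2\Z$ in $C(X,\Z/2\Z)$. Choose a clopen set $Z$ and $\psi\in[[\phi]]$ with $\psi(Z)\cap Z=\emptyset$ (such $\psi$ exists, by comparison of clopen subsets in a minimal system, as soon as $Z$ has measure less than $1/2$ for every invariant measure). Put $Z_k:=Z\cap\phi^{-k}(O)$ and let $a_k\in[[\phi]]$ be the involution that carries $Z_k$ onto $\psi(Z_k)$ by $\psi$, carries $\psi(Z_k)$ back by $\psi^{-1}$, and fixes everything else; a short computation in $C(X,\Z/2\Z)/\{f-f\circ\phi\}$ gives $a_k\in D([[\phi]])$. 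Because every $a_k$ pushes the ``$Z$-side'' to the ``$\psi(Z)$-side'', the $a_k$ pairwise commute, and applying the $a_k$ $(k\in F)$ in turn moves a point $x\in Z$ to $\psi(x)$ or fixes it according to the parity of $|\{k\in F:\phi^k(x)\in O\}|$. Hence $\prod_{k\in F}a_k$ is exactly the analogous involution supported on $Z\cap\{x:\sum_{k\in F}1_O(\phi^k(x))=1\}$. By the Proposition the clopen set $\{x:\sum_{k\in F}1_O(\phi^k(x))=1\}$ is nonempty for every finite nonempty $F$, so it remains to choose $Z$ so that it meets all of these sets, equivalently so that restriction to $Z$ keeps the $\Z/2\Z$-span of $\{1_O\circ\phi^k\}$ injective; using minimality (each such set is clopen and nonempty) and the factor map of the preceding Lemma one produces such a $Z$ of small measure, and then $\langle a_k\mid k\in\Z\rangle\cong\bigoplus_\Z\Z_2$.

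It remains to produce $t\in D([[\phi]])$ of infinite order with $ta_kt^{-1}=a_{k+1}$; through the factor map $\pi:X\to\{0,1\}^\Z$ of the preceding Lemma (for which $1_O=\pi(\cdot)_0$) this means $t$ should preserve $Z$, commute with $\psi$, and satisfy $\pi\circ t=\sigma^{-1}\circ\pi$ on $Z$, which one arranges by taking $t$ of the form $\theta\circ\phi^{-1}$ with $\theta\in[[\phi]]$ bringing $\phi^{-1}(Z)$ back onto $Z$ without disturbing $\pi$; infinite order of $t$ and membership in $D([[\phi]])$ are then read off from the index and the $\Z/2\Z$-invariant. This is the technique of \cite[Theorem~8.1]{DFG}. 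I expect the main obstacle to be exactly this simultaneous compatibility: arranging a single triple $(Z,\psi,t)$ so that the lamps $a_k$ stay genuinely independent — which is precisely what the Proposition delivers — while a bona fide shift element $t$ is available inside $D([[\phi]])$ normalizing the whole configuration.
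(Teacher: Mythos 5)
Your reduction to exhibiting commuting involutions $a_k=t^kat^{-k}$ with all finite products nontrivial is sound, and your (2)$\Rightarrow$(3), (3)$\Rightarrow$(1) are exactly the paper's. But the implementation of (1)$\Rightarrow$(2) has a genuine gap precisely at the point you flag as ``the main obstacle''. You apply the Proposition to $(X,\phi)$ itself and then try to cut down to a small clopen $Z$ so that the sets $W_F=\{x\mid\sum_{k\in F}1_O(\phi^k(x))=1\}$ all meet $Z$; minimality does not give this. These are countably many prescribed nonempty clopen sets, and a fixed clopen $Z$ has no reason to meet them all: for instance, if $Z$ is $\pi$-measurable (a union of cylinders in the coordinates of the factor $\pi$), then for any $F$ contained in the cylinder window whose parity is forced to be $0$ on $Z$, the set $W_F$ is disjoint from $Z$, and nothing in your sketch rules out analogous failures in general. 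The paper avoids exactly this issue by inducing: it takes a clopen $U$ with $U,\phi(U),\phi^2(U),\phi^3(U)$ disjoint, lets $\psi$ be the first return map on $U$, observes that $(U,\psi|U)$ is again not an odometer, and applies the Proposition \emph{to the induced system}, producing $O\subset U$ with $\sum_{k\in F}1_O\circ\psi^k\not\equiv0$ mod $2$; the independent family is then automatically supported where it is needed, with no restriction step to justify.

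Two further steps are asserted rather than proved, and both are handled differently (and more robustly) in the paper. First, your claim that each lamp $a_k$ lies in $D([[\phi]])$ by ``a short computation'' in $C(X,\Z/2\Z)/\{f-f\circ\phi\}$ is doubtful: the mod-$2$ invariant of such a transposition is the class of $1_{Z_k}$, which need not vanish, so these involutions are in general not commutators. Second, the shift element $t$ ``of the form $\theta\circ\phi^{-1}$ not disturbing $\pi$'' is not coherent as stated, since $\pi\circ\phi=\sigma\circ\pi$ forces every element of $[[\phi]]$ to act on $\pi$-coordinates through its cocycle; no construction of $t$ with the required intertwining, infinite order, and membership in $D([[\phi]])$ is given. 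The paper's route is concrete: $r=\psi\circ\phi\circ\psi\circ\phi^{-1}$ has infinite order and satisfies $r\circ\tau_V\circ r^{-1}=\tau_{\psi(V)}$ for the explicit involutions $\tau_V$ ($V\subset U$ clopen), so $\langle r,\tau_O\rangle\cong L$ inside $[[\phi]]$; and membership in $D([[\phi]])$ is then obtained at the end, not for $r$ and $s=\tau_O$ themselves, but by passing to the commutators $r\circ\phi^2\circ r^{-1}\circ\phi^{-2}$ and $s\circ\phi^2\circ s^{-1}\circ\phi^{-2}$, which still generate a lamplighter group because the supports $U\cup\phi(U)$ and $\phi^2(U)\cup\phi^3(U)$ are disjoint. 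Adopting these two devices (the induced system in place of your restriction to $Z$, and the disjoint-support commutator trick in place of putting the generators directly in $D([[\phi]])$) would close the gaps in your argument.
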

\begin{proof}
(2)$\Rightarrow$(3) is obvious. 
(3)$\Rightarrow$(1) immediately follows from Proposition \ref{odmtr}. 
We show (1)$\Rightarrow$(2). 
Choose a non-empty clopen subset $U\subset X$ so that 
$U$, $\phi(U)$, $\phi^2(U)$ and $\phi^3(U)$ are disjoint. 
Let $\psi$ be the first return map on $U$ 
(see \cite[Definition 1.5]{GPS95crelle}). 
Letting $\psi(x)=x$ for $x\in X\setminus U$, 
we may regard $\psi$ as an element of $[[\phi]]$. 
Define $r=\psi\circ\phi\circ\psi\circ\phi^{-1}$. 
Clearly $r$ is of infinite order. 
For each clopen subset $V\subset U$, 
we define $\tau_V\in[[\phi]]$ by 
\[
\tau_V(x)=\begin{cases}\phi(x)&x\in V\\\phi^{-1}(x)&x\in\phi(V)\\
x&\text{otherwise.}\end{cases}
\]
It is easy to see that for any clopen subsets $V,W\subset U$, 
$\tau_V$ and $\tau_W$ commute. 
Also, we have $r\circ\tau_V\circ r^{-1}=\tau_{\psi(V)}$. 
Furthermore, for clopen subsets $V_1,V_2,\dots,V_n\subset U$, 
$\tau_{V_1}\circ\tau_{V_2}\circ\dots\circ\tau_{V_n}$ equals the identity 
if and only if $1_{V_1}+1_{V_2}+\dots+1_{V_n}$ equals zero 
(as a $\Z_2$-valued function). 
Since $(X,\phi)$ is not an odometer, 
neither is $(U,\psi|U)$. 
It follows from the proposition above that 
there exists a clopen subset $O\subset U$ such that 
for any finite subset $F\subset\Z$, the function 
\[
\sum_{k\in F}1_O\circ\psi^k
\]
is not identically zero mod $2$. 
Define $s=\tau_O$. 
Because $r^k\circ s\circ r^{-k}=\tau_{\psi^k(O)}$ for any $k\in\Z$, 
the homeomorphisms $r^k\circ s\circ r^{-k}$ commute with each other. 
Moreover, for any non-empty finite subset $\{k_1,k_2,\dots,k_n\}\subset\Z$, 
\[
(r^{k_1}\circ s\circ r^{-k_1})\circ(r^{k_2}\circ s\circ r^{-k_2})\circ
\dots\circ(r^{k_n}\circ s\circ r^{-k_n})
\]
is not the identity, because 
\[
1_{\psi^{k_1}(O)}+1_{\psi^{k_2}(O)}+\dots+1_{\psi^{k_n}(O)}\neq0. 
\]
Therefore, 
the subgroup generated by $r$ and $s$ is isomorphic to the lamplighter group. 
The support of $r$ and $s$ is contained in $U\cup\phi(U)$, and so 
the support of $\phi^2\circ r\circ\phi^{-2}$ and 
$\phi^2\circ s\circ\phi^{-2}$ is contained in $\phi^2(U)\cup\phi^3(U)$. 
Since $U\cup\phi(U)$ and $\phi^2(U)\cup\phi^3(U)$ are disjoint, 
the subgroup 
\[
\langle r\circ\phi^2\circ r^{-1}\circ\phi^{-2},\ 
s\circ\phi^2\circ s^{-1}\circ\phi^{-2}\rangle\subset D([[\phi]])
\]
is also isomorphic to the lamplighter group, 
which completes the proof. 
\end{proof}

\begin{corollary}\label{exponential}
Let $(X,\phi)$ be a Cantor minimal system. 
If $D([[\phi]])$ is finitely generated, then 
it has exponential growth. 
\end{corollary}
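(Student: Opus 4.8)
The plan is to obtain the corollary from Theorem \ref{lamplighter}, after disposing of the odometer case, together with the standard observation that exponential growth is inherited by a finitely generated group from a finitely generated subgroup.

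So suppose $D([[\phi]])$ is finitely generated. First I would argue that $(X,\phi)$ cannot be an odometer. The quick route is to quote \cite[Theorem 5.4]{M06IJM}: $D([[\phi]])$ is finitely generated exactly when $\phi$ is a minimal subshift over a finite alphabet, and an odometer, being equicontinuous and hence not expansive, is not conjugate to any subshift. A self-contained alternative uses Proposition \ref{odmtr}: for an odometer $[[\phi]]$ is the \emph{increasing} union of the subgroups $\Gamma_k\cong\Z^{m_k}\rtimes S_{m_k}$ constructed there, so a finitely generated subgroup must lie inside a single $\Gamma_K$. But $D([[\phi]])$ lies in no $\Gamma_K$: if $m_K=1$ then $\Gamma_K$ is abelian while $D([[\phi]])$ is not, and if $m_K\geq 2$ then, choosing $k>K$ with $m_k>m_K$, the image of $A_{m_k}$ under the splitting $S_{m_k}\hookrightarrow\Gamma_k$ lies in $D(\Gamma_k)\subset D([[\phi]])$, and among the corresponding $3$-cycles one can pick one whose three moved clopen pieces $U(k,\cdot)$ do not all sit in the same member of the coarser partition $\{U(K,\cdot)\}$; such an element does not permute $\{U(K,\cdot)\}$, so it is not in $\Gamma_K$. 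Either way the hypothesis forces $(X,\phi)$ to be non-odometer.

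Now Theorem \ref{lamplighter} gives an embedding of the lamplighter group $L$ into $D([[\phi]])$, and $L$ is finitely generated with exponential growth, as recalled before that theorem. It then remains to invoke the elementary fact that if $H\leq G$ with $H$ and $G$ finitely generated and $H$ of exponential growth, then $G$ has exponential growth: fixing finite generating sets and writing each generator of $H$ as a word of length $\leq\lambda$ in that of $G$ yields an inclusion of balls $B_H(n)\subseteq B_G(\lambda n)$, so an exponential lower bound for $\lvert B_H(n)\rvert$ produces one for $\lvert B_G(\lambda n)\rvert$ and hence for the growth of $G$. Applying this with $H=L$ and $G=D([[\phi]])$ completes the argument.

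I do not expect a genuine obstacle: the substantive input is Theorem \ref{lamplighter}, the growth comparison is routine, and the only place needing any care is excluding odometers — which is either a one-line appeal to \cite[Theorem 5.4]{M06IJM} or the short computation with the partitions $\{U(k,\cdot)\}$ sketched above.
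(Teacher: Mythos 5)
Your argument is correct and follows essentially the same route as the paper: exclude the odometer case (the paper does this via Proposition \ref{odmtr} together with the remark quoting \cite[Theorem 5.4]{M06IJM}, just as in your first option), then embed the lamplighter group via Theorem \ref{lamplighter} and pass exponential growth up to the finitely generated overgroup. Your self-contained alternative for ruling out odometers is a harmless extra; the paper simply cites the known results instead.
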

\begin{proof}
By Proposition \ref{odmtr}, $\phi$ is not an odometer. 
(We remark that $D([[\phi]])$ is finitely generated 
if and only if $\phi$ is a minimal subshift over a finite alphabet, 
see \cite[Theorem 5.4]{M06IJM}.) 
It follows from the theorem above that 
$D([[\phi]])$ contains the lamplighter group $L$. 
As mentioned above, $L$ has exponential growth. 
Therefore $D([[\phi]])$ has exponential growth, too. 
\end{proof}

\section{Generators of full groups}

In this section, we will prove that 
the measurable full group associated to the countable, measure-preserving, 
ergodic and hyperfinite equivalence relation is 
topologically generated by two elements (Theorem \ref{t=2}).

\subsection{Algebraic generators of topological full groups}

Let $\alpha\in(0,1)$ be an irrational number and 
let $(X,\phi)$ be the Sturmian shift 
arising from the $\alpha$-rotation on $\T=\R/\Z$. 
In \cite[Example 6.2]{M06IJM}, it was shown that 
the topological full group $[[\phi]]$ is (algebraically) generated 
by the three elements $\sigma_U$, $\sigma_V$ and $\phi$. 
In this subsection, for $0<\alpha<1/6$, we will show that 
the subgroup generated by $\phi$ and $\sigma_U$ contains 
the commutator subgroup $D([[\phi]])$. 

Assume $0<\alpha<1/6$. 
We recall the notation used in \cite[Example 6.2]{M06IJM}. 
The clopen subset corresponding to the interval $[0,\alpha)\subset\T$ 
is denoted by $U\subset X$. 
The element $\sigma_U\in[[\phi]]$ is defined by 
\[
\sigma_U(x)=\begin{cases}\phi(x)&x\in U\\\phi^{-1}(U)&x\in\phi(U)\\
x&\text{otherwise}. \end{cases}
\]

\begin{proposition}\label{sturmian}
In the setting above, let $G\subset[[\phi]]$ be the subgroup 
generated by $\phi$ and $\sigma_U$. 
\begin{enumerate}
\item The commutator subgroup $D([[\phi]])$ is contained in $G$. 
\item The subgroup $G$ is normal and $[[\phi]]/G\cong\Z/2\Z$. 
\end{enumerate}
\end{proposition}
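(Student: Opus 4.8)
The plan is to prove (1) first and then deduce (2) from it together with the known description of $[[\phi]]/D([[\phi]])$. Write $G=\langle\phi,\sigma_U\rangle$. Conjugating $\sigma_U$ by powers of $\phi$ shows that $G$ contains, for every $k\in\Z$, the involution $\sigma_{\phi^k(U)}=\phi^k\sigma_U\phi^{-k}$ that swaps $\phi^k(U)$ and $\phi^{k+1}(U)$. Since $\alpha<1/6$, the sets $U,\phi(U),\phi^2(U)$ are disjoint, and a direct computation then shows that the commutator $\gamma:=[\sigma_U,\phi\sigma_U\phi^{-1}]$ is the $3$-cycle cyclically permuting $U\to\phi^2(U)\to\phi(U)\to U$ through powers of $\phi$; being a commutator, $\gamma\in D([[\phi]])$. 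Hence $G$ contains all translates $\phi^k\gamma\phi^{-k}$ and all products and commutators formed from them.

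The heart of (1) is to pass from these $3$-cycles, which are built on translates of the single clopen set $U$, to the analogous $3$-cycles built on an arbitrary clopen set. This is where the structure of the Sturmian shift enters. Firstly, because $0<\alpha<1/6$ the six sets $U,\phi(U),\dots,\phi^5(U)$ are pairwise disjoint, which gives enough room to perform the required conjugations and commutators without collisions. Secondly, the Boolean algebra of clopen subsets of $X$ is generated by $\{\phi^k(U):k\in\Z\}$, so every clopen set is obtained from the translates of $U$ by finitely many intersections and complements. One then bootstraps: refining a clopen set one step at a time by intersecting with a suitable translate $\phi^k(U)$ and using conjugation by, and commutators of, elements already shown to lie in $G$ --- in the spirit of the generation argument of \cite[Example 6.2]{M06IJM} --- to obtain that the $3$-cycle $\gamma_A$ on $A\to\phi^2(A)\to\phi(A)\to A$ lies in $G$ for every clopen $A$ with $A,\phi(A),\phi^2(A)$ disjoint. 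Since a suitable such family of $3$-cycles generates $D([[\phi]])$ --- they are the analogue inside $[[\phi]]$ of the alternating group --- this gives $D([[\phi]])\subseteq G$, which is (1).

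For (2), grant (1). Then $G=q^{-1}(q(G))$ is the preimage under the quotient map $q\colon[[\phi]]\to[[\phi]]/D([[\phi]])$ of a subgroup of an abelian group, hence $G$ is normal. Moreover $q(G)=\langle q(\phi),q(\sigma_U)\rangle$, so $[[\phi]]/G\cong\bigl([[\phi]]/D([[\phi]])\bigr)/q(G)$ can be computed directly. Using $[[\phi]]/D([[\phi]])\cong\Z\oplus C(X,\Z/2\Z)/\{f-f\circ\phi\}$, the fact that for the Sturmian shift $C(X,\Z/2\Z)/\{f-f\circ\phi\}\cong(\Z/2\Z)^2$, that the image of $\phi$ projects to $\pm1$ in the $\Z$-summand, and that $\sigma_U$ has trivial $\Z$-component and maps to the nonzero class $[1_U]$ in $(\Z/2\Z)^2$, one finds $[[\phi]]/G\cong(\Z/2\Z)^2/\langle[1_U]\rangle\cong\Z/2\Z$. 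In particular $\sigma_V\notin G$, in agreement with \cite[Example 6.2]{M06IJM}.

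The step I expect to be the main obstacle is the bootstrapping in (1): given an arbitrary clopen subset $A$ of the Sturmian system, producing $\gamma_A$ --- or enough $3$-cycles --- as an explicit word in $\phi$ and $\sigma_U$. This forces one to keep track of how the clopen algebra is generated from the translates of $U$, i.e.\ of the continued-fraction combinatorics of the rotation by $\alpha$, while ensuring that every homeomorphism produced along the way visibly lies in $\langle\phi,\sigma_U\rangle$. Once (1) is established, the normality of $G$ and the index computation in (2) are routine.
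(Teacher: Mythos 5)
There is a genuine gap, and you have located it yourself: the ``bootstrapping'' step in (1) is never carried out, and it is exactly the content of the proposition. Your sketch gets as far as the paper does in its first lines (the elements $\sigma_{\phi^k(U)}=\phi^k\sigma_U\phi^{-k}$ lie in $G$, and $\gamma_{\phi^k(U)}=\sigma_{\phi^k(U)}\sigma_{\phi^{k-1}(U)}\sigma_{\phi^k(U)}\sigma_{\phi^{k-1}(U)}$ is a commutator-type $3$-cycle in $G$), but the passage from these to $\gamma_W$ for enough clopen sets $W$ is not a matter of generic Boolean-algebra or continued-fraction combinatorics; it needs two concrete inputs that your proposal does not supply. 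First, for the Sturmian shift the translates $\phi^n(U)$ correspond to the arcs $[n\alpha,(n{+}1)\alpha)\subset\T$, so \emph{every} clopen subset of $X$ is a finite disjoint union of sets of the single simple form $\phi^m(U)\cap\phi^n(U)$; combined with \cite[Lemma 5.2]{M06IJM} (which says the $\gamma_W$, $W\in\mathcal{W}$, generate $D([[\phi]])$), this reduces (1) to producing $\gamma_{\phi^m(U)\cap\phi^n(U)}$ in $G$. Second, this is done by an explicit identity, \cite[Lemma 5.3 (ii)]{M06IJM}: since $\alpha<1/6$ the five sets $\phi^{n-2}(U),\phi^{n-1}(U),\phi^n(U)\cup\phi^m(U),\phi^{m+1}(U),\phi^{m+2}(U)$ are disjoint, and then
\[
\gamma_{\phi^{m+1}(U)}\circ\gamma_{\phi^{n-1}(U)}^{-1}\circ\gamma_{\phi^{m+1}(U)}^{-1}\circ\gamma_{\phi^{n-1}(U)}=\gamma_{\phi^m(U)\cap\phi^n(U)} .
\]
This is where the hypothesis $\alpha<1/6$ actually enters, and it is the missing idea; ``refining one step at a time in the spirit of the generation argument'' does not by itself produce these elements as words in $\phi$ and $\sigma_U$.

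On (2), your route is genuinely different from the paper's: you quotient by $D([[\phi]])$ and compute inside the abelianization $\Z\oplus C(X,\Z_2)/\{f-f\circ\phi\}$. This can be made to work, but as written it rests on two further unproved claims specific to the Sturmian shift --- that $C(X,\Z_2)/\{f-f\circ\phi\}\cong(\Z/2\Z)^2$ and that $\sigma_U$ has trivial $\Z$-component and maps to the nonzero class $[1_U]$ --- so it trades one computation for another. The paper's argument is more elementary: $[[\phi]]$ is generated by $\phi,\sigma_U,\sigma_V$ (\cite[Example 6.2]{M06IJM}), $\sigma_V\notin G$, and $\sigma_V\phi\sigma_V=(\sigma_V\phi\sigma_V\phi^{-1})\phi$ and $\sigma_V\sigma_U\sigma_V=(\sigma_V\sigma_U\sigma_V\sigma_U)\sigma_U$ lie in $G$ once (1) is known, giving normality and $[[\phi]]/G\cong\Z/2\Z$ directly. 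Normality via ``$G$ is a preimage of a subgroup of the abelianization'' is fine and agrees in spirit; but the index computation should either be done the paper's way or be accompanied by proofs of the two facts above.
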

\begin{proof}
(1)
Let $\mathcal{W}$ be the set of all clopen subsets $W\subset X$ 
such that $\phi^{-1}(W)$, $W$, $\phi(W)$ are mutually disjoint. 
Clearly $U$ is in $\mathcal{W}$. 
For $W\in\mathcal{W}$, we define $\gamma_W\in[[\phi]]$ by 
\[
\gamma_W(x)=\begin{cases}\phi(x)&x\in\phi^{-1}(W)\cup W\\
\phi^{-2}(x)&x\in\phi(W)\\x&\text{otherwise}. \end{cases}
\]
It is easy to see that $\gamma_W$ belongs to $D([[\phi]])$ 
(see the comment before \cite[Lemma 5.2]{M06IJM}). 
It was shown in \cite[Lemma 5.2]{M06IJM} that 
the commutator subgroup $D([[\phi]])$ is generated by 
$\{\gamma_W\mid W\in\mathcal{W}\}$ 
(see also the comment before \cite[Theorem 4.9]{M06IJM}). 
For any $n\in\Z$, $\phi^n(U)$ is the clopen subset 
corresponding to the interval $[n\alpha,(n{+}1)\alpha)\subset\T$. 
For any $m,n\in\Z$, if $\phi^m(U)\cap\phi^n(U)$ is not empty, then 
it corresponds to 
either $[m\alpha,(n{+}1)\alpha)$ or $[n\alpha,(m{+}1)\alpha)$. 
Hence any clopen subset $W\subset X$ can be written 
as a finite disjoint union of 
clopen subsets of the form $\phi^m(U)\cap\phi^n(U)$. 
It follows that $D([[\phi]])$ is generated by 
\[
\{\gamma_W\mid\exists m,n\in\Z,\ W{=}\phi^m(U)\cap\phi^n(U)\}. 
\]
Since $\phi^n\circ\sigma_U\circ\phi^{-n}=\sigma_{\phi^n(U)}$, one verifies 
\begin{align*}
&(\phi^n\circ\sigma_U\circ\phi^{-n})
\circ(\phi^{n-1}\circ\sigma_U\circ\phi^{-n+1})
\circ(\phi^n\circ\sigma_U\circ\phi^{-n})
\circ(\phi^{n-1}\circ\sigma_U\circ\phi^{-n+1})\\
&=\sigma_{\phi^n(U)}\circ\sigma_{\phi^{n-1}(U)}
\circ\sigma_{\phi^n(U)}\circ\sigma_{\phi^{n-1}(U)}\\
&=\gamma_{\phi^n(U)}, 
\end{align*}
and so $\gamma_{\phi^n(U)}$ belongs to $G$. 
Suppose that 
the clopen subset $\phi^m(U)\cap\phi^n(U)$ corresponds to 
the interval $[m\alpha,(n{+}1)\alpha)\subset\T$. 
Then $\phi^{n-2}(U)$, $\phi^{n-1}(U)$, $\phi^n(U)\cup\phi^m(U)$, 
$\phi^{m+1}(U)$, $\phi^{m+2}(U)$ are mutually disjoint, 
because $\alpha$ is less than $1/6$. 
Therefore, thanks to \cite[Lemma 5.3 (ii)]{M06IJM}, we have 
\[
\gamma_{\phi^{m+1}(U)}\circ\gamma_{\phi^{n-1}(U)}^{-1}
\circ\gamma_{\phi^{m+1}(U)}^{-1}\circ\gamma_{\phi^{n-1}(U)}
=\gamma_{\phi^m(U)\cap\phi^n(U)}, 
\]
and so $\gamma_{\phi^m(U)\cap\phi^n(U)}$ belongs to $G$. 
When the clopen subset $\phi^m(U)\cap\phi^n(U)$ corresponds to 
the interval $[n\alpha,(m{+}1)\alpha)\subset\T$, 
we obtain the same conclusion in a similar way. 
Hence $G$ contains the commutator subgroup $D([[\phi]])$. 

(2)
By \cite[Example 5.2]{M06IJM}, 
$[[\phi]]$ is generated by $\phi$, $\sigma_U$ and $\sigma_V$, 
where $V\subset X$ is another clopen subset. 
The element $\sigma_V$ is of order two and is not contained in $G$. 
We can check 
\[
\sigma_V\circ\phi\circ\sigma_V
=(\sigma_V\circ\phi\circ\sigma_V\circ\phi^{-1})\circ\phi\in G
\]
and 
\[
\sigma_V\circ\sigma_U\circ\sigma_V
=(\sigma_V\circ\sigma_U\circ\sigma_V\circ\sigma_U)\circ\sigma_U\in G. 
\]
It follows that $G$ is normal and $[[\phi]]/G\cong\Z/2\Z$. 
\end{proof}

\subsection{Topological generators of measurable full groups}

In this subsection we follow the notation of \cite{KT10Ergodic}. 
Let $X$ be a standard Borel space and 
let $\mu$ be a non-atomic Borel probability measure on it. 
Denote by $\Aut(X,\mu)$ the group of 
all measure-preserving automorphisms of $(X,\mu)$ (modulo null sets). 
We equip the group $\Aut(X,\mu)$ with the topology induced by the metric 
\[
d(f,g)=\mu(\{x\in X\mid f(x)=g(x)\})
\]
and call it the uniform topology. 
For a countable, Borel, measure-preserving equivalence relation $E$ on $X$, 
its measurable full group $[E]$ is defined by 
\[
[E]=\{f\in\Aut(X,\mu)\mid(x,f(x))\in E\text{ for almost every }x\in X\}. 
\]
The measurable full group $[E]$ is a closed subgroup of $\Aut(X,\mu)$ 
in the uniform topology and 
they turn out to be separable, and hence Polish. 
Following \cite{KT10Ergodic}, 
we let $t([E])$ denote the minimum number of topological generators of $[E]$ 
(i.e. the minimum number of elements 
which generate a dense subgroup of $[E]$). 
By using Proposition \ref{sturmian}, 
we can improve some results for $t([E])$ obtained in \cite{KT10Ergodic}. 

The following theorem answers \cite[Question 4.3]{KT10Ergodic}. 

\begin{theorem}\label{t=2}
Let $E$ be the countable, measure-preserving, ergodic and hyperfinite 
equivalence relation on the standard probability space $(X,\mu)$. 
Then $t([E])=2$. 
\end{theorem}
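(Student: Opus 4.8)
The plan is to produce a two-element topological generating set for $[E]$, the matching lower bound being immediate: $[E]$ is non-abelian, while the closure of a cyclic subgroup of a topological group is abelian, so no single element topologically generates $[E]$ and $t([E])\ge 2$.

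For the upper bound I would first pass to a convenient model. By Dye's theorem, any two countable, measure-preserving, ergodic, hyperfinite equivalence relations on a non-atomic standard probability space are orbit equivalent, and an orbit equivalence induces an isomorphism of the corresponding measurable full groups as topological groups; hence we may assume that $E=R_\phi$ is the orbit equivalence relation of a Sturmian shift $(X,\phi)$ arising from an $\alpha$-rotation with $0<\alpha<1/6$ (such a shift is minimal and uniquely ergodic, so $R_\phi$ is indeed ergodic and hyperfinite on a non-atomic space), and that $\mu$ is the unique $\phi$-invariant probability measure. Then $[[\phi]]$ is a subgroup of $[R_\phi]$, and the first fact I would use is that $[[\phi]]$ is \emph{dense} in $[R_\phi]$ for the uniform topology; this is standard (it follows by approximating an element of $[R_\phi]$ on Kakutani--Rokhlin towers for $\phi$, and is used in \cite{KT10Ergodic}).

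Now I would invoke Proposition \ref{sturmian}: the subgroup $G=\langle\phi,\sigma_U\rangle$ contains the commutator subgroup $D([[\phi]])$ (it has index two in $[[\phi]]$). Since $[[\phi]]$ is dense in $[R_\phi]$ and the commutator map $(a,b)\mapsto aba^{-1}b^{-1}$ is continuous, every commutator of elements of $[R_\phi]$ is a limit of commutators of elements of $[[\phi]]$; hence $\overline{D([[\phi]])}\supseteq D([R_\phi])$, and being closed it contains $\overline{D([R_\phi])}$. At this point I would appeal to the known fact that $[R_\phi]$ is topologically perfect for ergodic $E$ (in fact $[R_\phi]$ is topologically simple, having no proper closed normal subgroup), so that $\overline{D([R_\phi])}=[R_\phi]$. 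It follows that $\overline{G}\supseteq\overline{D([[\phi]])}=[R_\phi]$, i.e. $G$ is dense. As $G$ is generated by the two elements $\phi$ and $\sigma_U$, these two elements topologically generate $[R_\phi]\cong[E]$, whence $t([E])\le 2$, and together with the lower bound $t([E])=2$.

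The main obstacle is the gap left by the algebra: Proposition \ref{sturmian} only gives $[[\phi]]/G\cong\Z/2\Z$, so a priori the closure $\overline G$ could be a proper closed subgroup of index two (necessarily open and normal) rather than all of $[R_\phi]$. Closing this gap is precisely what forces one to use the topological simplicity/perfectness of $[R_\phi]$ for ergodic $E$ --- equivalently, that $[R_\phi]$ admits no non-trivial continuous homomorphism onto a finite group --- together with the density of $[[\phi]]$ in $[R_\phi]$; the remaining ingredients (Dye's theorem, the abelianization computations behind Proposition \ref{sturmian}, and the $t([E])\ge 2$ lower bound) are routine.
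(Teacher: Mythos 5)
Your proposal is correct and follows essentially the same route as the paper: reduce to the Sturmian shift model, use Proposition \ref{sturmian} to get $D([[\phi]])\subset G=\langle\phi,\sigma_U\rangle$, invoke the density of $[[\phi]]$ in $[E]$ from \cite{KT10Ergodic}, and conclude via perfectness of $[E]$. The only cosmetic difference is that the paper quotes Eigen's algebraic simplicity of $[E]$ (so $[E]=D([E])$) where you invoke topological perfectness/simplicity; the argument is otherwise the same, and your explicit treatment of the lower bound and the Dye-theorem reduction just spells out what the paper leaves implicit.
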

\begin{proof}
Let $(X,\phi)$ be the Sturmian shift 
arising from an irrational number $\alpha\in(0,1/6)$. 
By Proposition \ref{sturmian}, 
there exists $\sigma_U\in[[\phi]]$ such that 
the subgroup $G\subset[[\phi]]$ generated by $\phi$ and $\sigma_U$ 
contains the commutator subgroup $D([[\phi]])$. 

Let $E\subset X\times X$ be the equivalence relation induced by $\phi$. 
There exists a unique $\phi$-invariant Borel probability measure $\mu$ on $X$. 
Then $E$ is the countable, measure-preserving, ergodic and hyperfinite 
equivalence relation on the standard probability space $(X,\mu)$. 
By \cite[Proposition 4.1]{KT10Ergodic}, 
$[[\phi]]$ is dense in $[E]$ in the uniform topology. 
In particular, the commutator subgroup $D([[\phi]])$ is dense 
in the commutator subgroup $D([E])$. 
It follows that $G$ is dense in $D([E])$. 
Since $[E]$ is simple by \cite{E82Israel}, $[E]=D([E])$. 
Therefore the group $G$ generated by the two elements $\phi$ and $\sigma_U$ 
is dense in $[E]$, 
which implies $t([E])=2$. 
\end{proof}

The theorem above enables us to sharpen the estimates 
given in Theorem 4.10 and Corollary 4.12 of \cite{KT10Ergodic}. 

\begin{corollary}\label{cost}
Let $E$ be a countable, measure-preserving, ergodic equivalence relation 
on the standard probability space $(X,\mu)$. 
\begin{enumerate}
\item If the cost of $E$ is less than $n$ for some $n\in\N$, 
then $t([E])\leq2n$. 
\item If $E$ is induced by a free action of the free group $\mathbb{F}_n$, 
then $n+1\leq t([E])\leq2(n+1)$. 
\end{enumerate}
\end{corollary}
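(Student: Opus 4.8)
The plan is to deduce Corollary \ref{cost} from Theorem \ref{t=2} by mimicking the proofs of Theorem 4.10 and Corollary 4.12 in \cite{KT10Ergodic}, simply substituting the improved bound $t([E])=2$ for the hyperfinite relation in place of the bound $\leq 3$ used there. The two statements are quantitative refinements, and the whole point is that every ``$3$'' in the original arguments that comes from the hyperfinite case should now be a ``$2$''.

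For (1), I would recall the standard fact (see \cite{KT10Ergodic}) that if the cost of $E$ is less than $n$, then $E$ can be generated, up to measure zero, by $n$ partial isomorphisms, and after a routine decomposition one realizes $[E]$ as built from $n$ hyperfinite pieces: more precisely, there is a family of $n$ subequivalence relations $E_1,\dots,E_n$, each hyperfinite (indeed generated by a single automorphism together with the full group of a hyperfinite relation), whose full groups together topologically generate $[E]$, and such that $[E_i]$ is topologically generated by $2$ elements by Theorem \ref{t=2} (on the part where $E_i$ is non-atomic; the finite part contributes nothing new). Collecting the $2n$ generators and invoking the fact that a subgroup containing a dense subgroup of each $[E_i]$ and respecting the gluing is dense in $[E]$, one gets $t([E])\leq 2n$. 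I would follow the bookkeeping of \cite[Theorem 4.10]{KT10Ergodic} verbatim, only replacing their use of $t(\text{hyperfinite})\leq 3$ by $t(\text{hyperfinite})=2$.

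For (2), the lower bound $n+1\leq t([E])$ is already in \cite[Corollary 4.12]{KT10Ergodic} and requires no change: it comes from the fact that $E$ induced by a free $\mathbb{F}_n$-action has cost $n$, and cost gives a lower bound for the number of topological generators via the formula relating cost to the rank of a generating family. For the upper bound, a free action of $\mathbb{F}_n$ yields a relation of cost $n$, so by part (1) with the bound ``cost less than $n+1$'' we get $t([E])\leq 2(n+1)$; alternatively, one directly splits $[E]$ using the $n$ generators of $\mathbb{F}_n$ plus one hyperfinite relation as in \cite{KT10Ergodic}, each contributing $2$ topological generators after Theorem \ref{t=2}, for a total of $2(n+1)$.

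The main obstacle, such as it is, is not conceptual but referential: one must check that the reductions in \cite[Section 4]{KT10Ergodic} genuinely isolate copies of the \emph{ergodic} hyperfinite relation on a \emph{non-atomic} space, so that Theorem \ref{t=2} applies on the nose, rather than some relation with atoms or a non-ergodic hyperfinite relation, which would need a separate (easy) argument. Since the purely hyperfinite building blocks in their construction are ergodic on their non-atomic parts, and atoms only decrease the number of generators needed, this check goes through and the constants halve as claimed. Beyond that, the proof is a direct transcription of the cited arguments with the new input, so I would not belabor the details.
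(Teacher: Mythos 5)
Your proposal is correct and is exactly what the paper intends: the paper offers no separate argument for this corollary, merely noting that Theorem \ref{t=2} sharpens the estimates of \cite[Theorem 4.10 and Corollary 4.12]{KT10Ergodic}, i.e.\ one reruns Kittrell--Tsankov's decomposition arguments with the bound $2$ in place of $3$ for the ergodic hyperfinite pieces, and keeps their lower bound in (2) unchanged. Your attention to the point that the hyperfinite building blocks there are ergodic and non-atomic, so that Theorem \ref{t=2} applies directly, is the right check to make.
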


\bigskip
\bigskip

\noindent
\textbf{Acknowledgement. }
The author would like to thank Koji Fujiwara and Konstantin Medynets 
for valuable discussions.

\end{document}